\newtheorem{theorem}{Theorem}
\newtheorem{lemma}{Lemma}
{
\theoremstyle{break}
\theorembodyfont{\rmfamily}
\newtheorem{algorithm}{Algorithm}
}
\newenvironment{proof}{\textit {Proof. }}{\hspace* {\fill}$\square$}
\newcommand {\kronecker}[2]{\bigl( \frac{#1}{#2} \bigr)}
\newcommand {\llog}{\mathrm{llog}}
\newcommand {\lllog}{\mathrm{lllog}}
\newcommand{\ZZ}{\mathbf {Z} }      
\newcommand{\QQ}{\mathbf {Q} }		
\newcommand{\RR}{\mathbf {R} }      
\newcommand{\CC}{\mathbf {C} }      
\newcommand{\FF}{\mathbf {F} }		
\newcommand{\Fp}{\FF_p}			
\newcommand{\Fpt}{\FF_{p^2}}		
\newcommand{\Or}{\mathcal {O} }		
\newcommand{\Gal}{{\mathrm {Gal}} }
\newcommand{\Cl}{{\mathrm {Cl}} }
\newcommand{\End}{{\mathrm {End}} }
\newcommand{\Ell}{{\mathrm {Ell}} }
\newcommand{\Emb}{{\mathrm {Emb}} }
\newcommand{\disc}{{\rm {disc}} }
\newcommand {\Li}{\mathrm{Li}}
\newcommand{\ga}{\mathfrak{a}}		
\newcommand{\gb}{\mathfrak{b}}		
\newcommand{\gl}{\mathfrak{l}}		
\newcommand{\cP}{\mathcal {P} }		
\newcommand{\Ap}{\mathcal {A}_{p,\infty}} 
\newcommand{\isar}{\ \smash{\mathop{\longrightarrow}\limits^{\thicksim}}\ }
\begin{document}

\title{Computing Hilbert Class Polynomials}
\author{Juliana Belding$^1$,
Reinier Br\"oker$^2$,
Andreas Enge$^3$,
Kristin Lauter$^2$}
\date {\small $^1$ Dept.
of Mathematics, University of Maryland, College Park, MD 20742, USA, \\
\texttt {jbelding@math.umd.edu} \\
$^2$ Microsoft
Research, One Microsoft Way, Redmond, WA 98052, USA, \\
\texttt {reinierb@microsoft.com}, \texttt {klauter@microsoft.com} \\
$^3$ INRIA Futurs \& Laboratoire d'Informatique (CNRS/UMR 7161),
\'Ecole polytechnique, \\ 91128 Palaiseau cedex, France,
\texttt {enge@lix.polytechnique.fr}
}

\maketitle

\begin{abstract}
We present and analyze two algorithms for computing the Hilbert class
polynomial~$H_D$. The first is a $p$-adic lifting algorithm for
inert primes $p$ in the order of discriminant~$D<0$.
The second is an improved Chinese remainder algorithm which uses the class
group action on CM-curves over finite fields.
Our run time analysis gives tighter bounds for the complexity of all known
algorithms for computing~$H_D$, and we show that all methods have comparable
run times.
 
\end{abstract}

\section{Introduction} 	

For an imaginary quadratic order $\Or = \Or_D$ of discriminant $D<0$, the 
$j$-invariant of the complex elliptic curve $\CC/\Or$ is an algebraic
integer. Its minimal polynomial $H_D\in\ZZ[X]$ is called the {\it Hilbert
class polynomial\/}. It defines the ring class field $K_\Or$ corresponding
to $\Or$, and within the context of explicit class field theory, it is 
natural to ask for an algorithm to {\it explicitly compute\/}~$H_D$.

Algorithms to compute $H_D$ are also interesting
for elliptic curve primality proving \cite{AtMo93}
and for cryptographic purposes \cite{CoFrAvDoLaNgVe06}; for instance, 
pairing-based cryptosystems using ordinary curves rely on complex multiplication techniques to generate the curves.
The classical approach to compute $H_D$
is to approximate the values $j(\tau_\ga)\in\CC$ of the 
complex analytic $j$-function at points $\tau_\ga$ in the upper half plane 
corresponding to the ideal classes $\ga$ for the order $\Or$. The polynomial
$H_D$ may be recovered by rounding the coefficients of
$
\prod_{\ga \in \Cl(\Or)}\left( X- j(\tau_\ga) \right) \in \CC[X]
$
to the nearest integer. It is shown in \cite{En06} that an optimized version of that algorithm
has a complexity that is essentially linear in the output size.

Alternatively one can compute $H_D$ using a $p$-adic lifting algorithm 
\cite{CoHe02,Br06}. Here, the prime $p$ splits completely in $K_\Or$ and
is therefore relatively large: it satisfies the lower bound $p \geq
|D|/4.$
In this paper we give a $p$-adic algorithm for {\it inert\/}
primes~$p$.
Such primes are typically much smaller than totally split primes, and under
GRH there exists an inert prime of size only $O((\log |D|)^2)$.
The complex multiplication theory underlying all methods is more intricate
for inert primes $p$, as the roots of $H_D \in \Fpt[X]$ are now
$j$-invariants
of {\it supersingular\/} elliptic curves. In Section~\ref {sec:cm}
we explain how to
define the canonical lift of a supersingular elliptic curve,
and in Section~\ref {sec:lift} we describe a method to explicitly
compute this lift.

In another direction, it was suggested in \cite{AgLaVe04} to compute $H_D$ 
modulo several totally split primes $p$ and then combine the information 
modulo $p$ using the Chinese remainder theorem to compute $H_D \in
\ZZ[X]$. The first version of this algorithm was quite impractical, and in 
Section~\ref{sec:mp} we improve this `multi-prime approach' in two different 
ways. We show how 
to incorporate inert primes, and we improve the
original approach for totally split primes using the class group action
on CM-curves.
We analyze the run time of the new algorithm in Section~\ref{sec:analysis}
in terms of the logarithmic height of $H_D$, its degree, the largest prime
needed to generate the class group of $\Or$ and the 
discriminant~$D$. Our tight bounds on the first two quantities from
Lemmata~\ref{boundh(D)} and \ref{bound1/a} apply to all methods to
compute~$H_D$. For the multi-prime approach, we derive the following result.
\begin{theorem}
\label {mainresult}
The algorithm presented in Section~\ref {sec:mp} computes,
for a discriminant $D<0$, the Hilbert class polynomial $H_D$.
If GRH holds true, the algorithm has an expected run time
\[
O \left( |D| (\log |D|)^{7+o(1)} \right).
\]
Under heuristic assumptions, the complexity becomes
\[
O \left( |D| (\log |D|)^{3+o(1)} \right).
\]
\end{theorem}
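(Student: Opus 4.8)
The plan is to separate correctness from the running-time bound, obtaining the latter by combining the structural estimates of Lemmata~\ref{boundh(D)} and~\ref{bound1/a} with an accounting of the per-prime work.

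\emph{Correctness.} For each prime $p$ used by the algorithm I would check that the polynomial it outputs in $\Fp[X]$ (when $p$ splits in $\Or_D$) or in $\Fpt[X]$ (when $p$ is inert) is the reduction of $H_D$. By the complex multiplication theory of Sections~\ref{sec:cm} and~\ref{sec:lift}, the roots of $H_D$ over $\overline{\FF}_p$ are exactly the $j$-invariants of elliptic curves whose endomorphism ring is $\Or_D$ -- ordinary and defined over $\Fp$ when $p$ splits, supersingular and defined over $\Fpt$ when $p$ is inert -- and they form a single free orbit under $\Cl(\Or_D)$ acting by horizontal isogenies. So it suffices to (i)~exhibit one such curve $E_0$, (ii)~compute a generating system of $\Cl(\Or_D)$ together with its isogeny action, and (iii)~output $\prod_\ga \bigl(X-j(\ga\cdot E_0)\bigr)$. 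Step~(i) is done, in the split case, by finding a curve of the prescribed trace $t$ with $t^2-4p=Df^2$ and checking that its endomorphism ring is $\Or_D$ rather than a larger order, and in the inert case by the canonical-lift construction of Section~\ref{sec:lift}; step~(ii) by walking the Cayley graph of $\Cl(\Or_D)$ on prime ideals $\gl$ of small norm $\ell$, the action of $\gl$ being read off from the appropriate root of $\Phi_\ell\bigl(X,j(E)\bigr)$, with $\gl$ and $\overline{\gl}$ distinguished by a Frobenius-eigenvalue or a second modular relation. Finally, the explicit CRT step returns $H_D\in\ZZ[X]$ correctly once the product of the primes used exceeds twice the largest coefficient of $H_D$ in absolute value, which is bounded through Lemma~\ref{boundh(D)}.

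\emph{Running time.} Put $h=h(D)=\deg H_D$ and let $B$ be the bit length of the largest coefficient of $H_D$; Lemma~\ref{bound1/a} bounds $\sum 1/a$ over the reduced forms, and together with Lemma~\ref{boundh(D)} this gives $B=O\bigl(\sqrt{|D|}(\log|D|)^2\bigr)$, while $h=O\bigl(\sqrt{|D|}\log|D|\bigr)$. Under GRH, effective Chebotarev supplies enough primes $p\le|D|^{1+o(1)}$ of the required splitting type that the algorithm can take a set $S$ with $\sum_{p\in S}\log p\ge B+1$, $|S|=O\bigl(\sqrt{|D|}\log|D|\bigr)$, and $\log p=O(\log|D|)$ for all $p\in S$; computing $\Cl(\Or_D)$ once -- from which one extracts, under GRH via Bach's bound, generators of norm at most $\ell_{\max}=O\bigl((\log|D|)^2\bigr)$ and a Gray-code traversal of the group -- costs $|D|^{o(1)}$. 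For a fixed $p\in S$: finding $E_0$ and verifying its endomorphism ring takes $(\log|D|)^{O(1)}$ expected time; the orbit enumeration makes $O(h)$ isogeny steps, each evaluating some $\Phi_\ell$ with $\ell\le\ell_{\max}$ at a point of $\Fp$ or $\Fpt$ -- $O(\ell_{\max}^2)$ base-field operations -- and extracting a root at subdominant cost, i.e. $(\log|D|)^{5+o(1)}$ bit operations per step; the final product of $h$ linear factors costs $h(\log|D|)^{1+o(1)}$. Hence one prime costs $O\bigl(h(\log|D|)^{5+o(1)}\bigr)=O\bigl(\sqrt{|D|}(\log|D|)^{6+o(1)}\bigr)$, and summing over $S$ and adding the $O\bigl(h|S|(\log|D|)^{1+o(1)}\bigr)$ cost of the CRT interpolation gives the claimed $O\bigl(|D|(\log|D|)^{7+o(1)}\bigr)$. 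For the heuristic bound one assumes instead that $\Cl(\Or_D)$ is generated by prime ideals of norm $(\log|D|)^{1+o(1)}$ and that short primes of each required type are as dense as expected; then the bounds on $\ell_{\max}$, $h$ and $|S|$ all improve by logarithmic factors, and the same accounting yields $O\bigl(|D|(\log|D|)^{3+o(1)}\bigr)$.

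\emph{Main obstacle.} The crux is not this bookkeeping but the per-prime handling of the class-group action: one must bound $\ell_{\max}$, which is where GRH genuinely enters, and -- more delicately -- justify that the modular-polynomial walk realizes the \emph{free} $\Cl(\Or_D)$-action. In the inert case this rests on the canonical-lift description of Sections~\ref{sec:cm}--\ref{sec:lift}, which is precisely what turns a supersingular $j$-invariant over $\Fpt$ into a point acted on by $\Cl(\Or_D)$; in every case one needs a consistent orientation of each $\ell$-isogeny step so that $\gl$ and $\overline{\gl}$ are never swapped, together with the assurance that the chosen generators connect the whole orbit. The prime count, the one-time class-group computation, the CRT reconstruction, and the absorption of the $o(1)$ exponents into iterated-logarithm factors from fast integer and polynomial arithmetic are then routine, given Lemmata~\ref{boundh(D)} and~\ref{bound1/a}.
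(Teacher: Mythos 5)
There is a genuine gap. Your per-prime accounting asserts that ``finding $E_0$ and verifying its endomorphism ring takes $(\log|D|)^{O(1)}$ expected time,'' but Algorithm~\ref{alg:split}, Step~1, finds $E_0$ by drawing random $j$-invariants over $\FF_p$ until one has the prescribed cardinality and endomorphism ring. The expected number of trials is $\Theta\bigl(p/h(D)\bigr)$, each trial costing $(\log p)^{2+o(1)}$ bit operations (point-order computations), and this is exactly the content of~(\ref{eq:point counting}). Under GRH, Lemma~\ref{boundprime} only guarantees primes of size up to $O\bigl(h(D)\max(h(D)(\log|D|)^4,n)\bigr)$; plugging in the GRH bounds on $h(D)$ and $n$ from Lemmata~\ref{boundh(D)} and~\ref{bound1/a}, this is $|D|(\log|D|)^{4+o(1)}$, so $p/h(D)$ can be as large as $\sqrt{|D|}(\log|D|)^{4+o(1)}$, and finding $E_0$ costs $\sqrt{|D|}(\log|D|)^{7+o(1)}$ per prime. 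This is, in the paper's analysis, \emph{the} dominant term in (\ref{eq:complexity}): the curve search, not the orbit enumeration, drives the exponent from an ``essentially linear in the output size'' $|D|(\log|D|)^{3+o(1)}$ up to the rigorous $|D|(\log|D|)^{7+o(1)}$.

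Your final figure happens to come out right only because you made a compensating error: you carry the unconditional bounds $h(D)=O(\sqrt{|D|}\log|D|)$ and $B=O(\sqrt{|D|}(\log|D|)^2)$ into the GRH branch (the paper uses the $\llog$-improved versions from Lemmata~\ref{boundh(D)} and~\ref{bound1/a}), inflating both $|S|$ and the orbit cost per prime by a $\log|D|$ factor, while simultaneously omitting the curve-search cost. Had you used the GRH values consistently, your (incomplete) per-prime cost would be $\sqrt{|D|}(\log|D|)^{5+o(1)}$ and $|S|=O(\sqrt{|D|}\,\llog|D|)$, giving only $|D|(\log|D|)^{5+o(1)}$ --- visibly short of the stated bound, which would have flagged the missing term. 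The same omission is harmless in the heuristic branch only because there the curve-search cost $\sqrt{|D|}(\log|D|)^{3+o(1)}$ happens to match the orbit-enumeration cost; you should still state it. Everything else --- correctness via the free $\Cl(\Or)$-action, the role of GRH in Chebotarev and in Bach's bound on $\ell(D)$, the CRT reconstruction --- follows the paper's route.
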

\noindent
We conclude by giving examples of the presented algorithms in Section~\ref
{sec:examples}.

\section{Complex multiplication in characteristic $p$} 
\label {sec:cm}

Throughout this section, $D < -4$ is any discriminant, and we write
$\Or$ for the imaginary quadratic order of discriminant~$D$. Let
$E/K_\Or$ be an elliptic curve with endomorphism ring isomorphic to $\Or$.
As
$\Or$ has rank 2 as a $\ZZ$-algebra, there are {\it two\/} isomorphisms
$\varphi: \End(E) \isar \Or.$
We always assume we have chosen the {\it normalized\/}
isomorphism, i.e., for all $y\in \Or$ we have $\varphi(y)^* \omega = 
y \omega$ for all invariant differentials $\omega$. For ease of notation, we
write $E$ for such a `normalized elliptic curve,' the isomorphism
$\varphi$ being understood.

For a field $F$, let $\Ell_D(F)$ be the set of isomorphism classes of 
elliptic curves over $F$ with endomorphism ring $\Or$. The ideal group of 
$\Or$ acts on $\Ell_D(K_\Or)$ via
$$
j(E) \mapsto j(E)^\ga = j(E/E[\ga]),
$$
where $E[\ga]$ is the group of $\ga$-torsion points, i.e., the points that
are annihilated by all $\alpha \in \ga \subset \Or = \End(E)$. As principal
ideals act trivially, this action factors through the class group
$\Cl(\Or)$.
The $\Cl(\Or)$-action is transitive and free, and $\Ell_D(K_\Or)$ is a
principal homogeneous $\Cl(\Or)$-space.

Let $p$ be a prime that splits completely in the ring class field $K_\Or$.
We can embed $K_\Or$ in the $p$-adic field $\QQ_p$,
and the reduction 
map $\ZZ_p \rightarrow \Fp$ induces a bijection
$\Ell_D(\QQ_p) \rightarrow \Ell_D(\Fp)$. The $\Cl(\Or)$-action respects
reduction modulo~$p$, and the set $\Ell_D(\Fp)$ is a
$\Cl(\Or)$-torsor, just like in characteristic zero. This observation is
of key importance for the improved `multi-prime' approach explained in
Section~\ref {sec:mp}. 

We now consider a prime $p$ that is {\it inert\/} in $\Or$, fixed for the
remainder of this section. As the principal
prime $(p) \subset \Or$ splits completely in $K_\Or$, all primes of
$K_\Or$ lying over $p$ have residue class degree~$2$. We view $K_\Or$ as
a subfield of the unramified degree $2$ extension $L$ of $\QQ_p$.
It is a classical result, see~\cite{De41} or \cite[Th.~13.12]{La87}, that for $[E] \in \Ell_D(L)$, the
reduction $E_p$ is {\it supersingular\/}. It can be defined over the finite
field $\Fpt$, and its endomorphism ring is a maximal order in the unique
quaternion algebra $\Ap$ which is ramified at $p$ and~$\infty$.
The reduction map $\ZZ_L \rightarrow \Fpt$ also induces an embedding
$f: \Or \hookrightarrow \End(E_p)$. This embedding is not
surjective, as it is in the totally split case, since $\End(E_p)$ has
rank $4$ as a $\ZZ$-algebra, and $\Or$ has rank~2.

We let $\Emb_D(\Fpt)$ be the set of isomorphism classes of pairs
$(E_p ,f)$ with $E_p/\Fpt$ a supersingular elliptic curve and $f:
\Or \hookrightarrow \End( E_p)$ an embedding.
Here, $( E_p, f)$ and $( E_p', f')$ are isomorphic if there
exists an isomorphism $h: E_p \isar E_p'$ of elliptic
curves with $h^{-1} f'(\alpha) h = f(\alpha)$ for all $\alpha\in\Or$.
As an analogue of picking the normalized isomorphism $\Or \isar \End(E)$ in
characteristic zero, we now identify $( E_p, f)$ and $( E_p', f')$
if $f$ equals the complex conjugate of $f'$.

\begin{theorem}Let $D<-4$ be a discriminant. If $p$ is inert
in
$\Or = \Or_D$, the reduction map
$
\pi: \Ell_D(L) \rightarrow \Emb_D(\Fpt)
$
is a bijection. Here, $L$ is the unramified extension of $\QQ_p$ of degree
2.
\end{theorem}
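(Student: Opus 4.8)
The plan is to construct an explicit inverse. The plan is to construct an explicit inverse, using the deformation theory of supersingular elliptic curves together with the arithmetic of the quaternion algebra $\mathcal{A}_{p,\infty}$. The key fact is that for a supersingular curve $E_p/\mathbf{F}_{p^2}$, the deformation space over $\mathbf{Z}_L$ is formally smooth of dimension one, and an embedding $f:\mathcal{O}\hookrightarrow\mathrm{End}(E_p)$ cuts out, via the theory of "lifting endomorphisms," a one-dimensional sub-locus consisting of those deformations to which a specified endomorphism lifts. More concretely, an endomorphism $\alpha\in\mathrm{End}(E_p)$ with $\mathbf{Z}[\alpha]\cong\mathcal{O}$ lifts to a deformation $\tilde E$ precisely when $\tilde E$ has CM by an order containing $\mathcal{O}$; since $\mathcal{O}$ is already the full imaginary quadratic order attached to $D$ and $D<-4$, this forces $\mathrm{End}(\tilde E)\cong\mathcal{O}$, so $\tilde E$ defines a point of $\mathrm{Ell}_D(L)$ (after checking it is actually defined over $L$, not merely over $\mathbf{Z}_L^{\mathrm{ur}}$, which follows because the CM point and its conjugate data are $\mathrm{Gal}(L/\mathbf{Q}_p)$-stable once we have fixed the conjugacy class of $f$). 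This will give a candidate inverse map $(E_p,f)\mapsto \tilde E$.

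The key steps, in order, are: (1) Show $\pi$ is well-defined, i.e.\ that reduction sends a normalized curve $[E]\in\mathrm{Ell}_D(L)$ to a pair $(E_p,f)$ with $f$ determined up to the conjugation identification we imposed — this is essentially recalled from \cite{De41}, \cite[Th.~13.12]{La87}, the only new point being to track the normalization/conjugation bookkeeping. (2) Show $\pi$ is injective: if two CM curves $E,E'/L$ reduce to isomorphic pairs, then the isomorphism of reductions lifts, because the CM action rigidifies the situation — two points of the Serre–Tate formal group that carry the same CM order and induce the same embedding into $\mathrm{End}(E_p)$ must coincide. Equivalently, one can argue via the injectivity of reduction on a fixed isogeny class of CM curves, a Deuring-style argument. (3) Show $\pi$ is surjective by the lifting construction above: given $(E_p,f)$, pick $\alpha=f(\omega)$ a generator of $\mathcal{O}$, lift $(E_p,\alpha)$ to characteristic zero using the fact that a single endomorphism always lifts along a suitable one-dimensional sub-scheme of the deformation space (Lubin–Tate / Gross's theory of quasi-canonical liftings), and check the lift is defined over $L$ and is normalized in the sense of Section~\ref{sec:cm}. (4) Finally match up cardinalities or, better, verify inverse-on-both-sides directly: $\pi$ composed with the lifting map is the identity on $\mathrm{Emb}_D(\mathbf{F}_{p^2})$ by construction, and the lifting map composed with $\pi$ is the identity on $\mathrm{Ell}_D(L)$ by the injectivity in step (2).

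The main obstacle I expect is step (3), controlling the field of definition of the lift and the normalization: the deformation-theoretic lift naturally lives over the ring of integers in some extension of $\mathbf{Q}_p$, and one must show that fixing the $\mathbf{F}_{p^2}$-pair $(E_p,f)$ — in particular, fixing which of $f,\bar f$ we use — pins the lift down to $L$ itself and to a unique normalized model, with no residual ambiguity. A clean way around this is to avoid an explicit inverse and instead prove injectivity and surjectivity separately by a counting argument: both $\mathrm{Ell}_D(L)$ and $\mathrm{Emb}_D(\mathbf{F}_{p^2})$ have cardinality $h(D)=\#\mathrm{Cl}(\mathcal{O})$ — the former because $\mathrm{Ell}_D(L)$ is a $\mathrm{Cl}(\mathcal{O})$-torsor exactly as in the split case, the latter by an optimal-embedding count (Eichler's mass formula / the theory of optimal embeddings into maximal orders of $\mathcal{A}_{p,\infty}$, with the factor of $2$ from complex conjugation cancelling the factor of $2$ that would otherwise appear). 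Then it suffices to prove $\pi$ is \emph{injective} — which is the cleaner of the two directions, following Deuring — and injectivity of a map between finite sets of equal cardinality yields bijectivity. I would present the proof in this second form, deriving the cardinality of $\mathrm{Emb}_D(\mathbf{F}_{p^2})$ as a lemma and reducing the theorem to injectivity of reduction on CM curves, which is classical.
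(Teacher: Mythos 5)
Your proposal is in the right spirit but takes a genuinely different and much heavier route than the paper. The paper disposes of surjectivity in one sentence via the Deuring lifting theorem and spends its effort on injectivity, proving it by an elementary commutant computation: if $E,E'\in\Ell_D(L)$ reduce to the same pair $(E_p,f)$, the connecting isogeny $\varphi_\ga\colon E\to E'$ reduces to an element $\overline\varphi_\ga$ commuting with $f(\tau)$, hence lying in the commutant $S=f(\End(E))\otimes\QQ$ of $f(\Or)$ inside $\Ap$; the paper then shows $S\cap\End(E_p)=f(\End(E))$ (that is, the reduced embedding $f$ is optimal) by a short $m$-torsion argument, which forces $\varphi_\ga$ to be an endomorphism of $E$, so $E\cong E'$. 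You instead reach for deformation theory (Lubin--Tate / Gross's quasi-canonical lifts) to build an explicit inverse, or alternatively a counting argument via Eichler's optimal-embedding mass formula. Both can be made to work, but they import far more machinery than the paper needs, and the counting route would require carefully normalizing the Eichler count to literally equal $h(D)$ under the paper's identification of $(E_p,f)$ with $(E_p,\bar f)$ --- a non-trivial bookkeeping step the paper sidesteps entirely. Two smaller points: Serre--Tate canonical coordinates do not exist for supersingular curves, so appealing to ``the Serre--Tate formal group'' is inaccurate here (the one-dimensional formal deformation space is still there, but the CM-lifting rigidity you want is Gross's result, not Serre--Tate's); and your instinct that injectivity is ``the cleaner of the two directions'' is backwards relative to the paper, where surjectivity is the one-liner and injectivity is where the real content sits, namely establishing the optimality of the reduced embedding $f$.
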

\begin {proof}
By the Deuring lifting theorem, see \cite{De41} or \cite[Th.~13.14]{La87}, 
we can lift an element of $\Emb_D(\Fpt)$ to an element of $\Ell_D(L)$.
Hence, the map is surjective.

Suppose that we have $\pi(E) = \pi(E')$. As $E$ and $E'$ both have endomorphism
ring $\Or$, they are isogenous. We let 
$\varphi_\ga: E \rightarrow E^\ga = E'$ be an isogeny. Writing $\Or =
\ZZ[\tau]$, we get
$$
f' = f^\ga: \tau \mapsto \overline\varphi_\ga f(\tau) \widehat{\overline\varphi}_\ga 
\otimes (\deg \overline\varphi_\ga)^{-1} \in \End(E_p) \otimes \QQ.
$$
The map $\overline\varphi_\ga$ commutes with $f(\tau)$ and 
is thus contained in $S = f(\End(E)) \otimes \QQ$.

Write $\Or' = S \cap \End(E_p)$, and let $m$ be the index
$[\Or':f(\End(E))]$.
For any $\delta\in\Or'$, there exists $\gamma\in\End(E)$ with $m\delta =
f(\gamma)$. As $f(\gamma)$ annihilates the $m$-torsion $E_p[m]$,
 $\gamma$ annihilates $E[m]$, thus it is a multiple of $m$ inside $\End(E)$. We
derive that $\delta$ is contained in $f(\End(E))$, and $\Or' = f(\End(E))$.
Hence, $\varphi_\ga$ is an endomorphism of $E$, and $E$ and $E^\ga$ are
isomorphic.
\end {proof}\par
\ \par\noindent
The {\it canonical lift\/} $\widetilde E$ of a pair $(E_p,f) \in 
\Emb_D(\Fpt)$ is defined as the inverse $\pi^{-1}(E_p,f) \in \Ell_D(L)$.
This generalizes the notion
of a canonical lift for ordinary elliptic curves, and the main step of
the $p$-adic algorithm described in Section~\ref {sec:lift} is to compute
$\widetilde E$: its $j$-invariant is a zero of
the Hilbert class polynomial $H_D \in L[X]$.

The reduction map $\Ell_D(L) \rightarrow \Emb_D(\Fpt)$ induces a transitive
and free action
of the class group on the set $\Emb_D(\Fpt)$. For an $\Or$-ideal $\ga$,
let $\varphi_\ga: E 
\rightarrow E^\ga$ be the isogeny of CM-curves with kernel $E[\ga]$. Writing
$\Or = \ZZ[\tau]$, let $\beta\in\End(E)$ be the image of $\tau$ under the
normalized isomorphism $\Or \isar \End(E)$. The normalized isomorphism for
$E^\ga$ is now given by
$$
\tau \mapsto \varphi_\ga \beta \widehat\varphi_\ga
\otimes (\deg\varphi_\ga)^{-1}.
$$
We have $E_p^\ga = (E^\ga)_p$ and $f^\ga$ is the composition
$\Or \isar \End(E^\ga) 
\hookrightarrow \End(E_p^\ga)$. Note that principal ideals indeed act trivially:
$\varphi_\ga$ is an endomorphism in this case and, as $\End(E)$ is
commutative, we have $f=f^\ga$. 

To explicitly compute this action, we fix one supersingular curve
$E_p/\Fpt$ 
and an isomorphism $i_{E_p}: \Ap \isar \End(E_p) \otimes \QQ$
and view the embedding $f$ as an injective map $f: \Or 
\hookrightarrow \Ap$. Let $R = i_{E_p}^{-1}(\End( E_p))$ be the maximal
order
of $\Ap$ corresponding to $E_p$.
For $\ga$ an ideal of $\Or$, we  compute the curve $E_p^\ga = 
\overline\varphi_\ga(E_p)$ and choose an auxiliary isogeny  
$\varphi_\gb: E_p \rightarrow E_p^\ga$. This induces an isomorphism 
$g_\gb: \Ap \isar \End(E_p^\ga) \otimes \QQ$ given by 
$$
\alpha \mapsto \varphi_\gb i_{ E_p}(\alpha)\widehat{\varphi}_\gb \otimes 
(\deg \varphi_\gb)^{-1}. 
$$ 
The left $R$-ideals $Rf(\ga)$ and $\gb$ are left-isomorphic by 
\cite[Th.~3.11]{Wa69} and thus 
we can find $x \in \Ap$ with $R f(\ga) = \gb x$.
As $y=f(\tau)$ is an element of $Rf(\ga)$, we get the 
embedding $\tau \mapsto xyx^{-1}$ into
the right order $R_\gb$ of $\gb$. By construction, the induced embedding 
$f^\ga: \Or \hookrightarrow \End(E_p^\ga)$ is precisely
$$
f^\ga(\tau) = g_\gb(xyx^{-1}) \in \End(E_p^\ga), 
$$
and this is independent of the choice of $\gb$. For example, if $E_p^\ga =
E_p$, 
then choosing $\varphi_\gb$ as the identity, we find $x$ with 
$Rf(\ga) = Rx$ to get the embedding $f^\ga: \tau \mapsto i_{E_p}(xyx^{-1}) \in
\End(E_p)$.

\section{The multi-prime approach} 
\label {sec:mp}

This section is devoted to a precise description of the new algorithm for
computing the Hilbert class polynomial $H_D\in\ZZ[X]$ via the Chinese
remainder theorem.

\begin {algorithm}
\label {alg:crt}
\textsc {Input}: an imaginary quadratic discriminant $D$ \\
\textsc {Output}: the Hilbert class polynomial $H_D\in\ZZ[X]$

\begin {enumerate}
\setcounter {enumi}{-1}
\item
Let $(A_i,B_i,C_i)_{i=1}^{h (D)}$ be the set of primitive
reduced binary quadratic forms of discriminant $B_i^2-4A_iC_i=D$
representing the class group $\Cl(\Or)$. Compute
\begin {equation}
\label {eq:n}
n = \left\lceil \log_2 \left( 2.48 \, h(D) + \pi \sqrt {|D|}
\sum_{i=1}^{h (D)}
\frac {1}{A_i} \right) \right\rceil + 1,
\end {equation}
which by \cite{En06} is an upper bound on the number of bits in the largest
coefficient of $H_D$.
\item
Choose a set $\cP$ of primes $p$ such that $N = \prod_{p\in\cP} p \geq 2^n$
and each $p$ is either inert in $\Or$ or totally split in $K_\Or$. 
\item
For all $p \in \cP$, depending on whether $p$ is split or inert in $\Or$,
compute
$H_D \bmod p$ using either Algorithm~\ref {alg:split} or~\ref {alg:inert}.
\item
Compute $H_D \bmod N$ by the Chinese remainder theorem, and return its 
representative in $\ZZ[X]$ with coefficients in 
$\left( - \frac {N}{2}, \frac {N}{2} \right)$.
\end {enumerate}
\end {algorithm}

The choice of $\cP$ in Step~1 leaves some room for different flavors of the
algorithm. Since Step~2 is exponential in $\log p$, the primes should be
chosen as small as possible. The simplest case is to only use split primes,
to be analyzed in Section~\ref {sec:analysis}.
As the run time of Step 2 is worse for inert primes than for
split primes, we view the use of inert primes as a practical
improvement.

\subsection{Split primes}

A prime $p$ splits completely in $K_\Or$ if and only if the equation
$
4p = u^2-v^2D
$
has a solution
in integers $u,v$. For any prime $p$, we can efficiently test if such a solution
exists using an algorithm due to Cornacchia.
In practice, we generate primes satisfying 
this relation by varying $u$ and $v$ and testing if $(u^2-v^2D)/4$ is prime.

\begin {algorithm}
\label {alg:split}
\textsc {Input}: an imaginary quadratic discriminant $D$ and a prime $p$ that 
splits completely in $K_\Or$ \\
\textsc {Output}: $H_D \bmod p$

\begin {enumerate}
\item
Find a curve $E$ over $\FF_{p}$ with endomorphism ring $\Or$. Set
$j = j(E)$.
\item
Compute the Galois conjugates $j^\ga$ for $\ga \in \Cl(\Or)$.
\item
Return $H_D \bmod p = \prod_{\ga \in \Cl(\Or)} (X - j^\ga)$.
\end {enumerate}
\end {algorithm}

\noindent
{\bf Note:}
The main difference between this algorithm and the one proposed in \cite
{AgLaVe04} is that the latter determines \textit
{all} curves with endomorphism ring $\Or$ via exhaustive search, while we search for
one and obtain the others via the action of $\Cl(\Or)$ on the set $\Ell_D(\Fp)$.

Step~1
can be implemented by picking $j$-invariants at random until one with the 
desired endomorphism ring is found. With $4p=u^2-v^2D$, a necessary condition 
is that the curve $E$ or its quadratic twist $E'$ has $p+1-u$ points.
In the case that $D$ is fundamental and $v=1$, this condition is also 
sufficient. To test if one of our curves $E$ has the right cardinality, we pick 
a random point $P\in E(\Fp)$ and check if $(p+1-u)P = 0$ or $(p+1+u)P = 0$ 
holds. If neither of them does, $E$ does not have endomorphism ring $\Or$. 
If $E$ survives this test, we select a few random points on both $E$ and 
$E'$ and compute the orders of these points {\it assuming\/} they 
divide $p+1\pm u$. 
If the curve $E$ indeed has $p+1\pm u$ points, we quickly find 
points $P \in E(\Fp)$, $P' \in E'(\Fp)$ of
maximal order, since we have $E(\Fp) \cong \ZZ/n_1\ZZ \times \ZZ/n_2\ZZ$ 
with $n_1 \mid n_2$ and a fraction $\varphi(n_2)/n_2$ of the points have 
maximal order. For $P$ and $P'$ of maximal order and $p>457$,
either the order of $P$ or the order of $P'$
is at least $4\sqrt{p}$, by \cite[Theorem 3.1]{Sc95}, due to J.-F. Mestre. 
As the Hasse interval has length~$4 \sqrt{p}$, this
then {\it proves\/} that $E$ has $p+1\pm u$ points.

Let $\Delta = \frac {D}{f^2}$ be the fundamental discriminant associated to
$D$. For $f \neq 1$ or $v \neq 1$ (which happens necessarily for $D \equiv 1
\bmod 8$), the curves with $p+1\pm u$ points admit any order $\Or_{g^2 \Delta}$
such that $g | f v$ as their endomorphism rings. In this case, one possible
strategy is to use Kohel's algorithm described in \cite[Th.~24]{Ko96} to
compute $g$, until a curve with $g = f$ is found. This variant is easiest
to analyze and enough to prove Theorem~\ref {mainresult}.

In practice, one would rather keep a curve that satisifes $f | g$, since
by the class number formula $g = vf$ with overwhelming probability.
As $v$ and thus $\frac {fv}{g}$ is small, it is then
possible to use another algorithm due to Kohel and analyzed in detail by
Fouquet--Morain \cite{Ko96,FoMo02} to quickly apply an isogeny of degree
$\frac {fv}{g}$ leading to a curve with endomorphism ring $\Or$.

Concerning Step~2, let $\Cl(\Or) = \bigoplus \langle \gl_i \rangle$ be a
decomposition of the class group into a direct product of cyclic groups
generated by invertible degree~$1$ prime ideals $\gl_i$ of order $h_i$
and
norm $\ell_i$ not dividing $pv$. The $j^\ga$ may then be obtained
successively by computing the Galois action of the $\gl_i$ on
$j$-invariants
of curves with endomorphism ring $\Or$ over $\Fp$, otherwise said, by
computing $\ell_i$-isogenous curves: $h_1 - 1$ successive applications of
$\gl_1$ yield $j^{\gl_1}$, \ldots, $j^{\gl_1^{h_1 - 1}}$; to each of them,
$\gl_2$ is applied $h_2 - 1$ times, and so forth.

To explicitly compute the action of $\gl = \gl_i$, we let $\Phi_\ell(X,Y)
\in \ZZ[X]$ be the classical modular polynomial. It is a model for the
modular curve $Y_0(\ell)$ parametrizing elliptic curves together with an
$\ell$-isogeny, and it satisfies $\Phi_\ell (j (z), j (\ell z)) = 0$ for the
modular function $j (z)$. If $j_0 \in \FF_{p}$ is the $j$-invariant of
some curve with endomorphism ring $\Or$, then all the roots in $\Fp$ of
$\Phi_\ell(X, j_0)$ are $j$-invariants of curves with endomorphism ring
$\Or$ by \cite[Prop.~23]{Ko96}. If $\gl$ is unramified, there are two roots,
$j_0^{\gl}$ and $j_0^{\gl^{-1}}$. For ramified $\gl$, we find only one root
$j_0^{\gl} = j_0^{\gl^{-1}}$. So Step~2 is reduced to determining roots of
univariate polynomials over $\FF_{p}$.

\subsection{Inert primes}
\label {ssec:ex inert}

\begin {algorithm}
\label {alg:inert}
\textsc {Input}: an imaginary quadratic discriminant $D$ and
a prime $p$ that is inert in $\Or$ \\
\textsc {Output}: $H_D \bmod p$

\begin {enumerate}
\item Compute the list of supersingular $j$-invariants over $\Fpt$ together
with
their endomorphism rings inside the quaternion algebra $\Ap$.
\item Compute an optimal embedding $f: \Or \hookrightarrow \Ap$ and let $R$
be
a maximal order that contains $f(\Or)$.
\item Select a curve $E/\Fpt$ in the list with $\End(E) \cong R$, and let
$j$
be its $j$-invariant.
\item Compute the Galois conjugates $j^\ga$ for $\ga \in \Cl(\Or)$.
\item Return $H_D \bmod p = \prod_{\ga \in \Cl(\Or)} (X - j^\ga)$.
\end {enumerate}
\end {algorithm}
As the number of supersingular $j$-invariants grows roughly like 
$(p-1)/12$, this algorithm is only feasible for {\it small\/} primes. 
For the explicit computation, we 
use an algorithm due to Cervi\~no \cite{Ce04} to compile our list. The
list gives a bijection between the set of $\Gal(\Fpt/\Fp)$-conjugacy
classes of supersingular $j$-invariants and the set of maximal orders
in~$\Ap$.

In Step~2 we compute an element $y\in\Ap$ satisfying the same minimal
polynomial as a generator $\tau$ of $\Or$. For non-fundamental discriminants
we need to ensure that the embedding is optimal, i.e., does not extend to an
embedding of the maximal overorder of $\Or$ into $\Ap$. Using 
standard algorithms for quaternion algebras, Step~2  poses no practical
problems.
To compute the action of an ideal $\ga$ in Step 4, we note that 
the right order $R'$ of the left $R$-ideal $Rf(\ga)$ is isomorphic to the
endomorphism ring $\End(E')$ of a curve $E'$ with $j(E') = j^\ga$ by
\cite[Prop.~3.9]{Wa69}. The order $R'$ is isomorphic to a unique order in 
the list, and we get a conjugacy class of supersingular $j$-invariants. Since roots of
$H_D \bmod p$ which are not in $\Fp$ come in conjugate pairs, this allows us to compute all the
Galois conjugates $j^\ga$.

\section{Computing the canonical lift of a supersingular curve} 
\label{sec:lift}

In this section we explain how to compute the Hilbert class polynomial
$H_D$ of a discriminant $D<-4$ using a $p$-adic lifting technique for 
an inert prime $p \equiv 1 \bmod 12$. Our approach is based on the outline
described in \cite{CoHe02}. The condition $p \equiv 1 \bmod 12$ 
ensures that the $j$-values $0,1728 \in \Fp$ are not roots of 
$H_D \in \Fp[X]$. The case where one of these two values is a root of
$H_D\in\Fp[X]$ is more technical due to the extra automorphisms of the
curve, and will be explained in detail in the first author's PhD thesis.

Under GRH, we can take $p$ to be {\it small\/}. Indeed,
our condition amounts to prescribing a Frobenius symbol in the degree 8
extension $\QQ(\zeta_{12},\sqrt{D})/\QQ$, and by effective Chebotarev
\cite{LaOd77} we may take $p$ to be of size $O((\log |D|)^2)$.

The first step of the algorithm is the same as for
Algorithm~\ref {alg:inert} in Section~\ref {sec:mp}:
we compute a pair $( j(E_p) , f_0) \in \Emb_D(\Fpt)$. 
The main step of the algorithm is to compute to sufficient $p$-adic
precision the canonical lift $\tilde E_p$ of this pair, defined in 
Section~\ref {sec:cm} as the inverse under the bijection $\pi$ of Theorem~2.

For an arbitrary element $\eta \in \Emb_D(\Fpt)$, let
$$
X_D(\eta) = \{ (j(E),f) \mid j(E) \in \CC_p, (j(E) \bmod p ,f) = \eta \}
$$
be a `disc' of pairs lying over $\eta$. Here, $\CC_p$ is the completion
of an algebraic closure of $\QQ_p$. The disc $X_D(\eta)$ contains the points
of $\Ell_D(L)$ that reduce modulo $p$ to the $j$-invariant corresponding to~$\eta$.

These discs are similar to the discs used for the {\it split\/} case in
\cite{CoHe02,Br06}. The main difference is that now we need to keep track of the
embedding as well. We can adapt the key idea of \cite{CoHe02} to
construct a $p$-adic analytic map from the set of discs to itself that has
the CM-points as fixed points in the following way. Let $\ga$ be an $\Or$-ideal 
of norm $N$ that is coprime to $p$. We define a map
$$
\rho_\ga: \bigcup_\eta X_D(\eta) \rightarrow \bigcup_\eta X_D(\eta)
$$
as follows. For $(j(E),f) \in X_D(\eta)$, the ideal $f(\ga) \subset 
\End(E_p)$ defines a subgroup $E_p[f(\ga)] \subset 
 E_p[N]$ which lifts canonically to a subgroup $E[\ga] \subset E[N]$. We
define $\rho_\ga((j(E),f)) = (j(E/E[\ga]),f^\ga)$, where $f^\ga$ is as in
Section~\ref {sec:cm}.
If the map $f$ is clear, we also denote by $\rho_\ga$ the induced map
on the $j$-invariants.

For principal ideals $\ga = (\alpha)$, the map $\rho_\ga = \rho_\alpha$
stabilizes
every disc. Furthermore, as $\widetilde E_p[(\alpha)]$ determines an endomorphism
of
$\widetilde E_p$, the map $\rho_\alpha$ {\it fixes\/} the canonical lift
$j(\widetilde E_p)$. As $j(E_p)$ does not equal $0,1728\in\Fp$, the
map $\rho_\alpha$ is {\it $p$-adic analytic\/} by \cite[Theorem~4.2]{Br06}.

Writing $\alpha = a + b\tau$, the derivative of $\rho_\alpha$ in a
CM-point $j(\widetilde E)$ equals $\alpha/\overline\alpha\in \ZZ_L$ by
\cite[Lemma~4.3]{Br06}. For $p \nmid a,b$ this is a $p$-adic unit and we
can use a modified version of Newton's method to converge to
$j(\widetilde E)$ starting from a random lift $(j_1,f_0)\in X_D(\eta)$
of the chosen point $\eta = (j(E_p),f_0)\in\Fpt$. Indeed, the sequence
\begin {equation}
\label {eq:newton}
j_{k+1} = j_k - \frac{\rho_\alpha((j_k,f_0)) - j_k}
{\alpha/ \overline{\alpha} - 1} 
\end {equation}
converges quadratically to $j(\widetilde E$).
The run time of the resulting algorithm to compute $j(\widetilde E) \in L$
up to the necessary precision depends heavily on the choice of~$\alpha$. We find a suitable $\alpha$ by sieving in the
set $\{ a + b \tau \mid a,b \in \ZZ, \gcd(a,b) = 1, a, b \neq 0 \bmod p \}.$
We refer to the example in Section 6.3 for the explicit computation of the
map $\rho_\alpha$.

Once the canonical lift has been computed, the computation
of the Galois conjugates is easier. 
To compute the Galois conjugate $j(\widetilde E_p)^\gl$
of an ideal $\gl$ of prime norm $\ell \neq p$, we first compute the value
$j(E_p)^\gl\in\Fpt$ as in Algorithm~\ref {alg:inert} in Section~\ref {sec:mp}.
We then compute all roots of
the $\ell$-th modular polynomial $\Phi_\ell(j(\widetilde E_p),X) \in L[X]$ that
reduce to $j(E_p)^\gl$. If there is only one such root, we are done: this is
the Galois conjugate we are after. In general, if $m \geq 1$ is the $p$-adic
precision required to distinguish the roots, we compute the value
$\rho_\gl((j(\widetilde E_p),f_0))$ to $m+1$ $p$-adic digits precision to 
decide which root of the modular polynomial is the Galois conjugate.
After computing all conjugates, we expand the product
$
\prod_{\ga \in \Cl (\Or)}\left(X - j(\widetilde E_p)^\ga\right) 
\in \ZZ_L[X]
$
and recognize the coefficients as integers.

\section{Complexity analysis} 
\label {sec:analysis}

This section is devoted to the run time analysis of Algorithm~\ref
{alg:crt} and the proof of Theorem~\ref {mainresult}. 
To allow for an easier comparison with other methods to compute~$H_D$,
the analysis is
carried out with respect to all relevant variables: the discriminant~$D$,
the class number $h (D)$, the logarithmic height $n$ of the class
polynomial and the largest prime generator $\ell (D)$ of the class group,
before deriving a coarser bound depending only on $D$.

\subsection {Some number theoretic bounds}
\label {ssec:number theory bounds}

For the sake of brevity, we write $\llog$ for $\log \log$ and $\lllog$ for
$\log \log \log$.

The bound given in Algorithm~\ref {alg:crt} on $n$, the bit size of
the largest coefficient
of the class polynomial, depends essentially on two quantities: the class
number $h (D)$ of $\Or$ and the sum $\sum_{[A,B,C]} \frac {1}{A}$, taken
over a system of primitive reduced quadratic forms representing the class 
group $\Cl(\Or)$.

\begin{lemma}
\label{boundh(D)}
We have $h(D) = O(|D|^{1/2}\log |D|)$.
Under GRH, we have $h(D) = O (|D|^{1/2}\, \llog\, |D|)$.
\end{lemma}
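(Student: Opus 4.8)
The plan is to bound $h(D) = |\Cl(\Or_D)|$ via the analytic class number formula, which expresses $h(D)$ in terms of the value $L(1,\chi_D)$ of the Dirichlet $L$-function attached to the quadratic character $\chi_D = \kronecker{D}{\cdot}$. First I would recall that for $D < -4$ one has
\[
h(D) = \frac{w\sqrt{|D|}}{2\pi} L(1,\chi_D),
\]
where $w \in \{2,4,6\}$ is the number of roots of unity in $\Or_D$; since $D < -4$ forces $w = 2$, this reads $h(D) = \frac{\sqrt{|D|}}{\pi} L(1,\chi_D)$. Thus the entire problem reduces to an upper bound for $L(1,\chi_D)$: the unconditional statement needs $L(1,\chi_D) = O(\log |D|)$, and the GRH statement needs $L(1,\chi_D) = O(\llog\,|D|)$.

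For the unconditional bound, I would use the standard elementary estimate for $L(1,\chi)$ for a character of conductor $q$: truncating the Dirichlet series at $q$ and applying partial summation together with the Pólya--Vinogradov inequality (or even the trivial bound $|\sum_{n \le x} \chi(n)| \le q$ for the tail) gives $L(1,\chi) = \sum_{n \le q} \chi(n)/n + O(1) = O(\log q)$. Since the conductor of $\chi_D$ divides $|D|$, this yields $L(1,\chi_D) = O(\log |D|)$ and hence $h(D) = O(|D|^{1/2}\log|D|)$. For the conditional bound, I would invoke the classical theorem of Littlewood: under GRH for $L(s,\chi_D)$, one has $L(1,\chi_D) \ll \log\log q$ (with an explicit constant involving $e^\gamma$), from which $h(D) = O(|D|^{1/2}\,\llog\,|D|)$ follows immediately. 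One could alternatively derive the GRH bound from a short-sum representation $L(1,\chi_D) = \sum_{n \le (\log|D|)^{2}} \chi_D(n)/n + o(1)$, valid under GRH, and then bound the resulting sum trivially by $\sum_{n \le (\log |D|)^2} 1/n = O(\llog\,|D|)$; I would likely just cite Littlewood.

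The main obstacle here is essentially bibliographic rather than mathematical: these are textbook facts (Davenport's \emph{Multiplicative Number Theory} or Montgomery--Vaughan for Pólya--Vinogradov and the $L(1,\chi)$ estimates, and Littlewood's original paper or its modern treatments for the GRH bound), so the "proof" is really a matter of assembling the class number formula with the right citation and making sure the conductor-versus-discriminant bookkeeping (a factor $f^2$ when $D = f^2\Delta$ is non-fundamental) does not affect the stated $O$-estimates, which it does not since $\log|D| \ge \log|\Delta|$. I would therefore keep the proof to a few lines: state the class number formula, reduce to $L(1,\chi_D)$, cite the two bounds, and conclude.
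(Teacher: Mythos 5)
Your proposal is correct and matches the paper's proof: both reduce via the analytic class number formula to bounding $L(1,\chi_D)$, then cite Littlewood for the GRH bound $L(1,\chi_D)=O(\llog|D|)$. The only cosmetic difference is the source for the unconditional bound $L(1,\chi_D)=O(\log|D|)$ — the paper cites Schur (1918) while you sketch the standard P\'olya--Vinogradov/partial-summation derivation — and your remark on the conductor bookkeeping for non-fundamental $D$, which the paper leaves implicit.
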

\begin {proof}
By the analytic class number formula, we have to bound the value of
the Dirichlet $L$-series $L(s,\chi_D)$ associated to $D$ at $s=1$. The
unconditional bound follows directly from \cite{Sc18}, the conditional bound
follows from \cite{Li28}.
\end {proof}

\begin{lemma}
\label{bound1/a}
We have $\sum_{[A,B,C]} \frac{1}{A} = O((\log |D|)^2)$.
If GRH holds true, we have $\sum_{[A,B,C]} \frac{1}{A} = O (\log |D|\, \llog\, |D|)$.
\end{lemma}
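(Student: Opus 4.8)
The plan is to bound the sum $\sum_{[A,B,C]} 1/A$ over a system of primitive reduced forms of discriminant $D$ by relating it to an $L$-value, just as in Lemma~\ref{boundh(D)}, but now tracking the extra arithmetic information carried by the leading coefficients $A$. The key observation is that for a reduced form $[A,B,C]$ one has $A \leq \sqrt{|D|/3}$, and more importantly that $A$ is the norm of the corresponding ideal class representative; so $\sum_{[A,B,C]} 1/A$ is a truncated sum over integral ideals of $\Or$ weighted by the inverse of their norm, restricted to one ideal per class. First I would compare this with the full Epstein-type sum $\sum_{\ga} 1/N(\ga)$ over \emph{all} primitive integral ideals of norm up to $\sqrt{|D|/3}$, which in turn is comparable (up to the factor $h(D)$ being absorbed) to a partial sum of the Dedekind zeta function $\zeta_{\Or}(s) = \zeta(s) L(s,\chi_D)$ near $s = 1$.

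Concretely, I would write $\sum_{[A,B,C]} \frac{1}{A} \leq \sum_{n \leq \sqrt{|D|/3}} \frac{r(n)}{n}$, where $r(n)$ counts primitive reduced forms with leading coefficient $n$; since each such $n$ is represented by at most $O(n^{o(1)})$ (in fact $O(2^{\omega(n)})$, the number of square roots of $D$ modulo $4n$) forms, and since the reduced forms inject into ideals of that norm, one gets a bound of the shape $\sum_{n \leq \sqrt{|D|}} \frac{c(n)}{n}$ with $c(n)$ a divisor-like function. Partial summation against the classical estimate $\sum_{n \leq x} c(n) \ll x \log x$ (or the conditional $x \,\llog\, x$ under GRH, via the zero-free region / Lindelöf-on-average input already cited as \cite{Li28}) then yields $O((\log|D|)^2)$ unconditionally and $O(\log|D| \,\llog\,|D|)$ under GRH. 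The dependence matches: one $\log$ comes from the harmonic-sum range $n \leq \sqrt{|D|}$, the second from the $L$-value / average order of the coefficient function.

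The main obstacle I anticipate is controlling the multiplicity $r(n)$ cleanly enough that the logarithmic factors come out exactly as claimed, rather than with an extra $\llog$ slack. The naive bound $r(n) \leq 2^{\omega(n)}$ is too lossy pointwise but is fine on average, so the argument must be arranged as a single partial summation against $\sum_{n\le x} 2^{\omega(n)} \ll x\log x$, never bounding $r(n)$ individually. A secondary subtlety is that the set of reduced forms is not literally the set of all ideals of norm $\leq \sqrt{|D|/3}$ — it is one representative per class and only those ideals that happen to be reduced — so the inequality goes the easy direction (reduced forms $\hookrightarrow$ small-norm primitive ideals) and no lower bound is needed, which keeps things clean. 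The GRH refinement is then purely a matter of substituting the conditional bound on $L(1,\chi_D)$ and on the partial sums of $\zeta_\Or$ coefficients in place of the unconditional Siegel-type input, exactly paralleling the proof of Lemma~\ref{boundh(D)}.
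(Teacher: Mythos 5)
Your opening move is the same as the paper's: bound $\sum 1/A$ by a sum over $A \le \sqrt{|D|}$ weighted by the number $r(A)$ of admissible $B$, i.e.\ by the number of square roots of $D$ mod $4A$. But there is a genuine gap in how you propose to finish, and it is exactly at the point you flag as the main obstacle. Once you replace $r(A)$ by $2^{\omega(A)}$, you have discarded the dependence on $D$, and partial summation against $\sum_{n\le x} 2^{\omega(n)} \ll x\log x$ gives $O((\log|D|)^2)$ with no possibility of improvement: that average-order estimate is unconditional and has nothing to do with $\chi_D$, so invoking GRH or \cite{Li28} at that stage has no sum left to act on. The remark that the GRH refinement is ``purely a matter of substituting the conditional bound on $L(1,\chi_D)$'' does not connect to anything in your chain of inequalities, because $L(1,\chi_D)$ never appears once you pass to $2^{\omega(n)}$.

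The paper avoids this by never weakening the multiplicity to $2^{\omega(A)}$. It keeps $r(A) \le \prod_{p\mid A}\bigl(1+\kronecker{D}{p}\bigr)$ and bounds
\[
\sum_{A\le\sqrt{|D|}} \frac{\prod_{p\mid A}\bigl(1+\kronecker{D}{p}\bigr)}{A}
\;\le\;
\prod_{p\le\sqrt{|D|}}\Bigl(1+\tfrac1p\Bigr)\Bigl(1+\tfrac{\kronecker{D}{p}}{p}\Bigr),
\]
then applies Mertens to the first factor (giving one $\log|D|$) and recognizes the second factor as essentially $L(1,\chi_D)$, so that Schur/Littlewood give $O(\log|D|)$ unconditionally and $O(\llog|D|)$ under GRH. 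This is also technically lighter than your plan: no partial summation or average-order input is needed, just the Euler product and Mertens. To repair your argument along your own lines you would have to run the partial summation against $\sum_{n\le x}\prod_{p\mid n}(1+\kronecker{D}{p})$ rather than $\sum_{n\le x}2^{\omega(n)}$, which effectively re-derives the Euler-product bound the paper uses directly.
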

\begin {proof}
The bound $\sum_{[A,B,C]} \frac{1}{A} = O((\log |D|)^2)$ is proved 
in \cite{Sc91} with precise constants in \cite{En06}; the argument below
will give a different proof of this fact.

By counting the solutions of $B^2 \equiv D \bmod 4A$ for varying $A$ and
using the Chinese remainder theorem, we obtain
$$
\sum_{[A,B,C]} \frac{1}{A}
\leq
\sum_{A \leq \sqrt{|D|}} \frac{\prod_{p \mid A}\left(1 +\kronecker {D}{p}
\right)}{A}.
$$
The Euler product expansion bounds this by $ \prod_{p\leq\sqrt{|D|}}
\left(1+\frac{1}{p}\right)\left(1+\frac {\kronecker{D}{p}}{p}\right)$. By
Mertens theorem, this is at most $c\log|D| \prod_{p \leq \sqrt{|D|}}\frac
{1}{1 - \kronecker{D}{p}/p}$ for some constant $c>0$. This last product is 
essentially the value of the Dirichlet $L$-series $L(1,\chi_D)$ and the
same remarks as in Lemma~\ref{boundh(D)} apply.
\end {proof}

\begin{lemma}
\label {boundprime}
If GRH holds true, the primes needed for Algorithm~\ref{alg:crt} 
are bounded by \\
$O \left( h (D) \max (h(D) (\log |D|)^4, n) \right)$.
\end{lemma}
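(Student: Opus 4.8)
The plan is to bound separately the two quantities that control the size of the primes needed in Algorithm~\ref{alg:crt}: the number of primes required (equivalently, the total bit size $n$ one must accumulate) and the density of admissible primes below a given bound. First I would recall that by Step~0 and Lemmata~\ref{boundh(D)} and~\ref{bound1/a} we have $n = O(h(D)\log|D| + (\log|D|)^2)$, so that $N = \prod_{p\in\cP}p$ need only exceed $2^n$ with $n$ of that size. Thus if admissible primes have positive density among all primes up to some bound $B$, then taking $\cP$ to consist of all such primes up to $B$ will give $\log N \gg B/\log B$ by the prime number theorem, and we need $B/\log B \gg n$, i.e.\ $B = O(n\log n) = O(n\,\llog|D|)$ up to the contribution of the density.

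Next I would make the density statement precise. A prime $p$ is admissible if it is either inert in $\Or$ or totally split in the ring class field $K_\Or$; it suffices to count, say, the totally split primes, since those already have density $1/(2h(D))$ by the Chebotarev density theorem applied to $K_\Or/\QQ$, which has degree $2h(D)$. Under GRH the effective form of Chebotarev (\cite{LaOd77}) guarantees that this asymptotic density is attained, with the expected count of such primes up to $B$ being $\frac{1}{2h(D)}\Li(B)$ once $B$ exceeds roughly $(\log|\disc K_\Or|)^2$; here $\disc K_\Or$ divides a power of $D$ of exponent $O(h(D))$, so $\log|\disc K_\Or| = O(h(D)\log|D|)$ and the threshold is $O(h(D)^2(\log|D|)^4)$. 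Combining, the largest prime needed is $O\!\left(h(D)\max\!\big(h(D)(\log|D|)^4,\, n\big)\right)$: either $B$ is governed by the Chebotarev threshold, contributing the $h(D)\cdot h(D)(\log|D|)^4$ term, or it is governed by the requirement $\log N \geq n$, which with density $1/(2h(D))$ forces $B = O(h(D)\,n)$ up to logarithmic factors absorbed into the implied constant.

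I would then note that admissible primes dividing the conductor or other small exceptional sets are finite in number and negligible, and that the inert primes only help (they enlarge $\cP$), so discarding them can only increase $B$ by a constant factor in the density — hence the bound still holds with split primes alone.

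The main obstacle I expect is the careful bookkeeping in the effective Chebotarev step: one must control $\log|\disc K_\Or|$ in terms of $|D|$ and $h(D)$ (using that $K_\Or$ is ramified only at primes dividing $D$, with bounded ramification), and one must verify that the GRH-effective error term in \cite{LaOd77} is dominated by the main term precisely when $B$ exceeds the stated threshold $h(D)^2(\log|D|)^4$ — the exponent $4$ comes from squaring a $\log|\disc K_\Or| = O(h(D)\log|D|)$ bound. The rest is the routine interplay between the prime-counting estimate $\log N \asymp \pi(B)\log B$ (or $\Li(B)$ weighted by density) and the target $n$, together with invoking Lemmata~\ref{boundh(D)} and~\ref{bound1/a} for $n$.
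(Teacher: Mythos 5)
Your proposal follows essentially the same route as the paper: it invokes the Lagarias--Odlyzko effective Chebotarev theorem under GRH, bounds $\log|\disc K_\Or|$ by $O(h(D)\log|D|)$ (the paper cites $\disc(K_\Or/\QQ) \leq |D|^{h(D)}$ from Br\"oker), uses the density $1/(2h(D))$ of totally split primes in $K_\Or/\QQ$, and balances the Chebotarev error threshold against the requirement $\log N \geq n$ to obtain the stated $\max$. Your heuristic ``$(\log|\disc K_\Or|)^2 \approx h(D)^2(\log|D|)^4$'' is off by a $(\log|D|)^2$ factor that actually comes from the $\log x$ inside the Lagarias--Odlyzko error term, but you correctly flag this step as the one requiring care and the final bound matches the paper's.
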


\begin {proof}
Let $k(D)$ be the required number of splitting primes. We have 
$k(D) \in O \left( \frac {n}{\log |D|} \right)$, since
each prime has at least $\log_2 |D|$ bits.

Let $\pi_1(x,K_\Or/\QQ)$ be the number of primes up to $x \in \RR_{>0}$ that
split completely in $K_\Or/\QQ$. By \cite[Th.~1.1]{LaOd77} there
is an effectively computable constant $c \in \RR_{>0}$, independent of
$D$, such that
\begin {equation}
\label {eq:pi}
\left| \pi_1(x,K_\Or/\QQ) - \frac{\Li (x)}{2h(D)} \right| \leq c\left( \frac
{x^{1/2}
\log(|D|^{h(D)}x^{2h(D)})}{2h(D)} + \log(|D|^{h(D)}) \right),
\end {equation}
where we have used the bound $\disc(K_\Or/\QQ) \leq |D|^{h(D)}$ proven in 
\cite[Lemma 3.1]{Br06}.
It suffices to find an $x \in \RR_{>0}$ for which 
$k(D) - \Li(x)/(2h(D))$ is larger than the right hand side of (\ref
{eq:pi}).
Using the estimate $\Li(x) \sim x/\log x$, we see that 
the choice $x = O \left( \max (h(D)^2 \log^4 |D|, h (D) n) \right)$ works.
\end {proof}

\subsection {Complexity of Algorithm~\ref {alg:split}}
\label {ssec:split complexity}

Let us fix some notation and briefly recall the complexities of the
asymptotically fastest algorithms for basic arithmetic.
Let $M (\log p) \in O (\log p\, \llog\, p\, \lllog\, p)$ be the time for
a multiplication in $\FF_p$ and $M_X (\ell, \log p) \in O (\ell \log
\ell \, M (\log p))$ the time for multiplying two polynomials over $\FF_p$
of degree $\ell$.

As the final complexity will be exponential in $\log p$, we need not worry 
about the detailed complexity of polynomial or subexponential steps. 
Writing $4 p = u^2 - v^2 D$ takes polynomial time by the Cornacchia and 
Tonelli--Shanks algorithms \cite[Sec~1.5]{Co96}. By Lemma~\ref {boundprime}, 
we may assume that $v$ is polynomial in $\log |D|$.

Concerning Step~2, we expect to check $O (p / h (D))$ curves until finding
one with endomorphism ring $\Or$. To test if a curve has the desired 
cardinality, we need to compute the orders of $O(\llog\, p)$ points, and
each order computation takes time $O \left( (\log p)^2\, M(\log p) \right)$.
Among the curves with the right
cardinality, a fraction of $\frac{h(D)}{H(v^2 D)}$, where $H (v^2 D)$ is
the Kronecker class number, has the desired endomorphism ring. So we expect
to apply Kohel's algorithm with run time $O(p^{1/3+o(1)})$ an expected
$\frac{H(v^2 D)}{h(D)} \in O (v\, \llog\, v)$ times. As $p^{1/3}$ is dominated
by
$p / h (D)$ of order about $p^{1/2}$, Step~2 takes time altogether
\begin {equation}
\label {eq:point counting}
O \left( \frac {p}{h (D)} (\log p)^2 \, M (\log p)\, \llog\, p \right).
\end {equation}
Heuristically, we only check if some random points are annihilated
by $p+1\pm u$ and do not compute their actual orders.
The $(\log p)^2$ in (\ref{eq:point counting}) then becomes $\log p$.

In Step~3, the decomposition of the class group into a product
of cyclic groups takes subexponential time. Furthermore, since all involved 
primes $\ell_i$ are of size $O ((\log |D|)^2)$ under GRH, the time needed
to compute the modular polynomials is negligible. Step~3 is thus dominated 
by $O (h (D))$ evaluations of reduced modular polynomials and by the 
computation of their roots.

Once $\Phi_\ell \bmod p$ is computed, it can be evaluated in time 
$O (\ell^2 M (\log p))$. Finding its roots is dominated by
the computation of $X^p$ modulo the specialized polynomial of degree
$\ell + 1$, which takes time $O (\log p \, M_X (\ell, \log p))$.
Letting $\ell (D)$ denote the largest prime needed to generate the class
group, Step~3 takes time
\begin {equation}
\label {eq:conjugates}
O \left(
h (D) \ell (D) \, M (\log p) (\ell (D) + \llog\, |D| \log p)
\right).
\end {equation}
Under GRH, $\ell (D) \in O ((\log |D|)^2)$, and heuristically,
$\ell (D) \in O\left( (\log |D|)^{1 + \varepsilon} \right)$.

By organizing the multiplications of polynomials in a tree of height $O
(\log h)$, Step~4 takes $O (\log h (D) \, M_X (h (D), \log p))$, which
is dominated by Step~3. We conclude that the total complexity of 
Algorithm~\ref {alg:split} is dominated 
by Steps~2 and~3 and given by the sum of (\ref{eq:point counting}) 
and (\ref{eq:conjugates}).

\subsection {Proof of Theorem~\ref {mainresult}}

We assume that $\cP = \{ p_1, p_2, \ldots \}$ is chosen as the set of the 
smallest primes $p$ that split into principal ideals of $\Or$.
Notice that $\log p$, $\log h (D) \in O (\log |D|)$, so that we
may express all logarithmic quantities with respect to $D$.

The dominant part of the algorithm are the $O(n/\log |D|)$ invocations of 
Algorithm~\ref {alg:split} in Step~2. 
Specializing (\ref {eq:point counting}) and (\ref {eq:conjugates}),
using the bound on the largest prime of Lemma~\ref {boundprime}
and assuming that $\ell (D) \in \Omega (\log |D| \llog\, |D|)$,
this takes time
\begin {equation}
\label {eq:complexity}
O \left( \hspace* {-0.5mm} n  \, M (\log |D|)
\left( h (D) \frac {\ell (D)^2}{\log |D|} + \log |D| \llog |D|
\max \left( h (D) (\log |D|)^4, n \right)
\right) \hspace* {-0.5mm} \right) \hspace* {-0.5mm}.
\end {equation}

Finally, the fast Chinese remainder algorithm takes $O (M (\log
N) \llog\, N)$ by \cite[Th.~10.25]{GaGe99}, so that Step~3 can be carried out
in $O (h (D) \, M (n) \log |D|)$, which is also dominated by Step~2.
Plugging the bounds of Lemmata~\ref {boundh(D)} and~\ref {bound1/a} into
(\ref {eq:complexity}) proves the rigorous part of
Theorem~\ref {mainresult}.

For the heuristic result, we note that Lemma~\ref {boundprime}
overestimates the size of the primes, since it
gives a very high bound already for the \textit {first} split prime.
Heuristically, one would rather expect that all primes are of
size $O (n h)$.
Combined with the heuristic improvements to
(\ref{eq:point counting}) and (\ref{eq:conjugates}), we find the run time\par
\medskip
\hfill
$
\displaystyle{ O \left( n \, M (\log |D|)
\left (n + h (D) \frac {\ell (D)^2}{\log |D|} \right) \right).} \hfill\square
$ 

\subsection {Comparison}
\label {ssec:comparison}

The bounds under GRH of Lemmata~\ref {boundh(D)} and~\ref {bound1/a} also
yield a tighter analysis for other algorithms computing $H_D$.
By \cite[Th.~1]{En06}, the run time of the complex analytic algorithm
turns out to be
$
O (|D| (\log |D|)^3  (\llog\, |D|)^3),
$
which is essentially the same as the heuristic bound of
Theorem~\ref {mainresult}.

The run time of the $p$-adic algorithm becomes $O(|D| (\log |D|)^{6+o(1)})$.
A heuristic run time analysis of this algorithm has not been undertaken, but
it seems likely that $O(|D| (\log |D|)^{3+o(1)})$ would be reached again.

\section{Examples and practical considerations} 
\label {sec:examples}

\subsection{Inert primes}

For very small primes there is a unique supersingular $j$-invariant in 
characteristic~$p$. For example, for $D \equiv 5 \bmod 8$, the prime $p=2$
is inert in $\Or_D$ and we immediately have $H_D \bmod 2 = X^{h(D)}$.

More work needs to be done if there is more than one supersingular
$j$-invariant
in $\Fpt$, as illustrated by computing $H_{-71} \bmod 53$.
The ideal $\ga = (2, 3 + \tau)$ generates the order 7 class group of $
\Or = \ZZ[\tau]$. The quaternion algebra $\Ap$ has a
basis $\{ 1 , i , j , k \}$ with $i^2 = -2, j^2 = -35,  ij = k$, and the 
maximal order $R$ with basis $\{ 1, i, 1/4(2 - i - k), -1/2(1 + i +
j)\}$ 
is isomorphic to the endomorphism ring of the curve with $j$-invariant 50. We
compute 
the embedding $f: \tau \mapsto y = 1/2 - 3/2i + 1/2j \in R$, where $y$
satisfies
$y^2 - y + 18= 0$. Calculating the right
orders of the left $R$-ideals $Rf(\ga^i)$ for $i = 1,\ldots,7$, we get a
sequence of
orders corresponding to the $j$-invariants $28 \pm 9\sqrt{2},
46, 0, 46, 28 \pm 9\sqrt{2}, 50, 50$ and compute
$H_{-71} \bmod 53 = X(X-46)^2(X-50)^2(X^2 + 50X + 39).$

\subsection{Totally split primes}
For $D=-71$, the smallest totally split prime is
$p=107=\frac {12^2+4 \cdot 71}{4}$.
Any curve over $\Fp$ with endomorphism ring $\Or$ is
isomorphic to a curve with $m = p+1\pm 12 = 96$ or $120$ points. By trying
randomly chosen $j$-invariants, we find that $E: Y^2 = X^3+X+35$ has $96$
points.
We either have $\End(E) = \Or_D$ or $\End(E) = \Or_{4D}$.
In this simple case there is no need to apply Kohel's algorithm. Indeed,
$\End(E)$ equals $\Or_D$ if and only if the complete $2$-torsion is
$\Fp$-rational. The curve $E$ has only the point $P = (18,0)$ as rational
$2$-torsion point, and therefore has endomorphism ring $\Or_{4D}$. The
$2$-isogenous curve $E' = E/\langle P \rangle$ given by
$
Y^2 = X^3+58X+59
$
of $j$-invariant $19$ has endomorphism ring $\Or_D$.

The smallest odd prime generating the class group is $\ell=3$. The third
modular polynomial $\Phi_\ell(X,Y)$ has the two roots $46,63$ when evaluated
in $X = j(E') = 19 \in\Fp$. Both values are roots of $H_D \bmod p$. We
successively
find the other Galois conjugates $64$, $77$, $30$, $57$ using the modular
polynomial $\Phi_\ell$ and expand
$$
H_{-71}\bmod 107 = X^7 + 72X^6 + 93X^5 + 73X^4 + 46X^3 + 29X^2 + 30X + 19.
$$

\subsection{Inert lifting}
We illustrate the algorithm of Section~\ref {sec:lift} by computing
$H_D$ for $D=-56$.

The prime $p=37$ is inert in $\Or = \Or_D$. The supersingular $j$-invariants
in characteristic $p$ are $8,3\pm 14\sqrt{-2}$. We fix a
curve $E=E_p$ with  $j$-invariant~8. We take the basis $\{1,i,j,k\}$ with
$i^2=-2,j^2=j-5,ij=k$ of the quaternion algebra $\Ap$. This basis is also
a $\ZZ$-basis for a maximal order $R\subset\Ap$ that is isomorphic to the
endomorphism ring $\End(E_p)$.

Writing $\Or_D = \ZZ[\tau]$, we compute an element $y = [0,1,1,-1]\in R$
satisfying $y^2 + 56=0$. This determines the embedding $f=f_0$ and we need to
lift the pair $(E,f)$ to its canonical lift. As element $\alpha$ for the
`Newton map' $\rho_\alpha$, we use a generator of $\ga^4$ 
where $\ga=(3,1+\tau)$ is a prime lying over $3$.

To find the kernel $E[f(\ga)]$ we check which $3$-torsion points $P \in E[3]$
are killed by $f(1+\tau)\in\End(E)$. We find $P = 18 \pm 9\sqrt{-2}$, and 
use V\'elu's formulas to find $E^\ga \cong E$ of $j$-invariant 8. As $E$ and 
$E^\ga$ are isomorphic, it is easy to compute $f^\ga$. We compute a left-generator 
$x = [1,1,0,0] \in R$ of the left $R$-ideal $Rf(\ga)$ to find $f^\ga(\tau) = xy/x =
[-1,0,1,1] \in R$.

Next, we compute the $\ga$-action on the pair $(E^\ga,f^\ga)=(E,f^\ga)$. We find
that $P = 19 \pm 12\sqrt{a}$ is annihilated by $f^\ga(1+\tau)\in\End(E)$.
The curve
$E^{\ga^2}$ of $j$-invariant $3-14\sqrt{-2}$ is not isomorphic to $E$.
We pick a 2-isogeny $\varphi_\gb: 
E^\ga \rightarrow E^{\ga^2}$ with kernel $\langle 19+23\sqrt{-2}\rangle$.
The ideal $\gb$ has basis $\{2,i+j,2j,k\}$ and is left-isomorphic to $Rf^\ga(\ga)$ via
left-multiplication by
$x' = [-1,1/2,1/2,-1/2] \in R$. We get $f^{\ga^2}(\tau) = x'y/x' = [0,1,1,-1]
\in R_\gb$ and we use the map $g_\gb$ from Section~\ref {sec:cm} to view this as an embedding
into $\End(E^{\ga^2})$.

The action of $\ga^3$ and $\ga^4$ is computed in the same way. We find a cycle
of 3-isogenies
\[ 
(E, f) \rightarrow (E^\ga=E, f^\ga) \rightarrow (E^{\ga^2}, f^{\ga^2}) 
\rightarrow (E^{\ga^3}, f^{\ga^3}) \rightarrow (E^{\ga^4}, f^{\ga^4} ) = (E,f)
\]
where each element of the cycle corresponds
uniquely to a root of $H_D$. We have now also computed $H_D \bmod p = 
(X - 8)^2(X^2 - 6X - 6).$

As a lift of $E$ we choose the curve defined by $Y^2 = X^3+210X+420$ over
the unramified extension $L$ of degree 2 of $\QQ_p$. We lift the cycle
of isogenies over $\Fpt$ to $L$ in 2 $p$-adic digits precision using
Hensel's
lemma, and update according to the Newton formula (\ref {eq:newton}) to find
$j(\widetilde E) = -66+148\sqrt{-2} + O(p^2)$. Next we work with 4 $p$-adic
digits precision, lift the cycle of isogenies and update the $j$-invariant as
before. In this example, it suffices to work with $16$ $p$-adic digits
precision to recover $H_D \in \ZZ[X]$.

Since we used a generator of an ideal generating the class group, we get
the Galois conjugates of $j(\widetilde E)$ as a byproduct of our computation.
In the end we expand the polynomial
$
H_{-56} = \prod_{\ga \in \Cl(\Or)} (X - j(\widetilde E)^\ga) \in \ZZ[X]
$
which has coefficients with up to 23 decimal digits.

\subsection{Chinese remainder theorem}
\label {ssec:ex crt}

As remarked in Section~\ref {ssec:comparison}, the heuristic run time of
Theorem~\ref {mainresult} is 
comparable to the expected run times of both the complex analytic and the
$p$-adic approaches from \cite{En06} and \cite{CoHe02,Br06}. To see
if the CRT-approach is comparable {\it in practice\/} as well, we computed
an example with a reasonably sized discriminant $D=-108708$, the first
discriminant with class number~$100$.

The \textit {a posteriori} height of $H_D$ is $5874$ bits,
and we fix a target precision of $n=5943$. The smallest totally split prime
is $27241$. If only such primes are used, the largest one is $956929$
for a total of $324$ primes. Note that these primes are indeed of size 
roughly $|D|$, in agreement with Lemma~\ref {boundprime}.
We have partially implemented the search for a suitable curve: for each $4p
= u^2 - v^2 D$ we look for the first $j$-invariant such that for a random
point $P$ on an associated curve, $(p+1) P$ and $u P$ have the same
$X$-coordinate. This allows us to treat the curve and its quadratic twist
simultaneously. The largest occurring value of $v$ is~$5$. Altogether,
$487237$ curves need to be checked for the target cardinality. 

On an Athlon-64 2.2~GHz computer, this step takes roughly $18.5$ seconds. As
comparison, the third authors' complex analytic implementation takes 
$0.3$ seconds on the same machine. To speed up the multi-prime approach,
we incorporated some inert primes. Out of the $168$ primes less than $1000$,
there are $85$ primes that are inert in $\Or$. For many of them, the computation
of $H_D \bmod p$ is trivial. Together, these primes contribute $707$ bits and
we only need $288$ totally split primes, the largest one being $802597$. The
required $381073$ curve cardinalities are tested in $14.2$ seconds. 

One needs to be careful when drawing conclusions from only few examples, but
the difference between $14.2$ and $0.3$ seconds suggests that the implicit
constants in the $O$-symbol are worse for the CRT-approach.

\subsection{Class invariants}

For many applications, we are mostly interested in a generating polynomial
for the ring class field $K_\Or$. As the Hilbert class polynomial has very
large coefficients, it is then better to use `smaller functions' than the
$j$-function to save a constant factor in the size of the polynomials. We 
refer to \cite{Sc02,St00} for the theory of such {\it class invariants\/}.

There are theoretical obstructions to incorporating class invariants into
Algorithm~\ref {alg:crt}.
Indeed, if a modular function $f$ has the property that there
are class invariants $f(\tau_1)$ and $f(\tau_2)$ with different 
minimal polynomials, we cannot use the CRT-approach.
This phenomenon occurs for instance for the double eta quotients
described in \cite{EnSc04}. For the discriminant $D$ in 
Section~\ref {ssec:ex crt}, we can use the double eta quotient of level 
$3 \cdot 109$ to improve the $0.3$ seconds of the complex analytic approach.
For CRT, we need to consider less favourable class invariants.

\subsubsection*{Acknowledgement.}
We thank Dan Bernstein, Fran\c{c}ois Morain and Larry Washington for
helpful discussions.


\end{document}